\newcolumntype{L}[1]{>{\raggedright\let\newline\\\arraybackslash\hspace{0pt}}m{#1}}
\newcolumntype{C}[1]{>{\centering\let\newline\\\arraybackslash\hspace{0pt}}m{#1}}
\newcolumntype{R}[1]{>{\raggedleft\let\newline\\\arraybackslash\hspace{0pt}}m{#1}}
\newtheoremstyle{theoremstyle}
{10pt}      %
{5pt}       %
{\itshape}  %
{}          %
{\bfseries} %
{}         %
{ }      %
{}          %
\newtheoremstyle{algorithmstyle}
{10pt}      %
{5pt}       %
{}  %
{}          %
{\bfseries} %
{}         %
{ }      %
{}          %
\newtheoremstyle{examplestyle}
{10pt}      %
{5pt}       %
{}          %
{}          %
{\bfseries} %
{}         %
{ }      %
{}          %
\theoremstyle{theoremstyle}
\newtheorem{theorem}{Theorem}[section]
\newtheorem*{theorem*}{Theorem}
\newtheorem{lemma}[theorem]{Lemma}
\newtheorem{proposition}[theorem]{Proposition}
\newtheorem*{proposition*}{Proposition}
\newtheorem*{corollary*}{Corollary}
\theoremstyle{examplestyle}
\newtheorem{example}[theorem]{Example}
\newtheorem{definition}[theorem]{Definition}
\newtheorem{definition*}{Definition}
\newtheorem{remark}[theorem]{Remark}
\newtheorem{remark*}{Remark}
\newtheorem{convention}[theorem]{Convention}
\theoremstyle{algorithmstyle}
\newtheorem{algorithm}[theorem]{Algorithm}
\newcommand{\bluetriangleright}{\footnotesize $\textcolor{blue!90}{\blacktriangleright}$}
\newcommand{\NN }{\mathbb{N}}
\newcommand{\CC }{\mathbb{C}}
\newcommand{\RR }{\mathbb{R}}
\newcommand{\ZZ }{\mathbb{Z}}
\newcommand{\FF }{\mathbb{F}}
\newcommand{\suchthat}{\;\ifnum\currentgrouptype=16 \middle\fi|\;}
\newcommand{\bigslant}[2]{{\raisebox{.2em}{$#1$}\left/\raisebox{-.2em}{$#2$}\right.}}
\newcommand{\Cplusplus}{C\nolinebreak\hspace{-.05em}\raisebox{.4ex}{\tiny\bf +}\nolinebreak\hspace{-.10em}\raisebox{.4ex}{\tiny\bf +}}
\newcommand{\AlgComment}[2]{\hspace{#1}\textit{\footnotesize \textcolor{black!70}{// #2}}}
\DeclareMathOperator{\Star}{Star}
\DeclareMathOperator{\Div}{Div}
\DeclareMathOperator{\initial}{in}
\DeclareMathOperator{\id}{id}
\DeclareMathOperator{\Relint}{Relint}
\DeclareMathOperator{\Trop}{Trop}
\DeclareMathOperator{\Lin}{Span}
\DeclareMathOperator{\stable}{st}
\renewcommand{\P}{\mathbb{P}}
\renewcommand{\O}{\mathcal{O}}
\DeclareMathOperator{\Cox}{Cox}
\DeclareMathOperator{\Bl}{Bl}
\newcommand\restr[2]{{\left.\kern-\nulldelimiterspace #1 \right|_{#2}}}
\newcommand{\customlabel}[2]{%
   \protected@write \@auxout {}{\string \newlabel {#1}{{#2}{\thepage}{#2}{#1}{}} }%
   \hypertarget{#1}{#2}
}
\newcommand{\subalign}[1]{%
  \vcenter{%
    \Let@ \restore@math@cr \default@tag
    \baselineskip\fontdimen10 \scriptfont\tw@
    \advance\baselineskip\fontdimen12 \scriptfont\tw@
    \lineskip\thr@@\fontdimen8 \scriptfont\thr@@
    \lineskiplimit\lineskip
    \ialign{\hfil$\m@th\scriptstyle##$&$\m@th\scriptstyle{}##$\crcr
      #1\crcr
    }%
  }
}
\begin{document}

\title[Detecting tropical defects of polynomial equations]{Detecting tropical defects of \\ polynomial equations}
\author{Paul G\"orlach}
\address{Max Planck Institute for Mathematics in the Sciences\\
  Inselstra{\ss}e 22\\
  04103 Leipzig\\
  Germany
}
\email{paul.goerlach@mis.mpg.de}
\urladdr{https://personal-homepages.mis.mpg.de/goerlach}
\author{Yue Ren}
\address{Max Planck Institute for Mathematics in the Sciences\\
  Inselstra{\ss}e 22\\
  04103 Leipzig\\
  Germany
}
\email{yue.ren@mis.mpg.de}
\urladdr{https://yueren.de}

\author{Jeff Sommars}
\address{University of Illinois Chicago\\
  322 Science and Engineering Offices\\
  851 S. Morgan Street\\
  Chicago, IL 60607\\
  USA
}
\email{sommars1@uic.edu}
\urladdr{https://homepages.math.uic.edu/\textasciitilde sommars}

\subjclass[2010]{14T04, 13P10, 68W30}

\date{\today}

\keywords{tropical geometry, tropical basis, computer algebra.}

\begin{abstract}
  We introduce the notion of tropical defects, certificates that a system of polynomial equations is not a tropical basis, and provide two algorithms for finding them
in
  affine spaces of complementary dimension to the zero set. We use these techniques to solve open problems regarding del Pezzo surfaces of degree~3 and realizability of valuated gaussoids on~$4$ elements. %
\end{abstract}

\maketitle

\section{Introduction}

The \emph{tropical variety} $\Trop(I)$ of a polynomial ideal $I$ is the image of its algebraic variety under component-wise valuation. Tropical varieties are commonly described as combinatorial shadows of their algebraic counterparts and arise naturally in many applications throughout mathematics and beyond. Inside mathematics for example, they enable new insights into important invariants in algebraic geometry \cite{Mikhalkin05} or the complexity of central algorithms in linear optimization \cite{ABGJ18}. Outside mathematics they arise as spaces of phylogenetic trees in biology \cite{SpeyerSturmfels04,PachterSturmfels05}, loci of indifference prizes in economics \cite{BaldwinKlemperer18,TranYu15} or in the proof of the finiteness of central configurations in the $4,5$-body problem in physics \cite{HM06,HJ11}.

As the image of an algebraic variety, a tropical variety equals the intersection of all tropical hypersurfaces of the polynomials inside the ideal. A natural question in this context is whether this equality already holds for a given finite generating set $F\subseteq I$, i.e.,
\begin{equation}
  \tag{$\ast$}\label{eq:tropicalBasis}
  \Trop(I)=\bigcap_{f\in I} \Trop(f)\stackrel{?}{=}\bigcap_{f\in F}\Trop(f)=:\Trop(F).
\end{equation}
We call $\Trop(F)$ a \emph{tropical prevariety} and, if equality holds, $F$ a \emph{tropical basis}. This question is important for two main reasons. On the one hand, tropical prevarieties can provide upper dimension bounds where Gr\"obner bases are infeasible to compute, see \cite{HM06,HJ11}, and a tropical basis implies that this bound is actually sharp. On the other hand, the difference between a tropical variety and prevariety can be interesting in and of itself, e.g., tropical matrices of Kapranov rank~$r$ versus tropical matrices of tropical rank~$r$ \cite{DSS05}, tropical Grassmannians versus their Dressians \cite{HJS14}, or other realizability loci of combinatorial objects such as $\Delta$-matroids \cite{Rincon12} or gaussoids \cite{BDKS17}.

Nevertheless, checking the equality in \eqref{eq:tropicalBasis} is a computationally highly challenging task. Current algorithms for computing tropical varieties require a Gr\"obner basis for each maximal Gr\"obner polyehdron, of which there can be many even for tropicalization of linear spaces \cite{JoswigSchroeter18}. Additionally, it is known that deciding the equality in \eqref{eq:tropicalBasis} is co-NP-hard, as is merely deciding whether $\Trop(F)$ is connected \cite{Theobald06}.   %

In practice, testing the equality in \eqref{eq:tropicalBasis} can fail for multiple reasons:
\begin{enumerate}[leftmargin=12mm]
\item[\customlabel{P1}{(P1)}] Computing $\Trop(F)$ might not be possible due to its size or due to the number of intersections necessary to compute it.
\item[\customlabel{P2}{(P2)}] Computing $\Trop(I)$ might not be feasible due to its size or due to problematic Gr\"obner cones in $\Trop(I)$ whose Gr\"obner bases are too hard to compute.
\end{enumerate}

In this article, we introduce the notion of \emph{tropical defects}, certificates for generating sets which are not tropical bases, and propose two randomized algorithms for computing tropical defects around affine subspaces of complementary dimension.
An independent verification of these certificates will require a single Gröbner basis computation.

The basic idea is simple, relying on some recent results on (stable) intersections of tropical varieties \cite{OP13,JY16}: to reduce the complexity of the computations, we (stably) intersect both sides of Equation \eqref{eq:tropicalBasis} with a random affine space of complementary dimension, and look for differences between the tropical variety and prevariety around it. Under certain genericity assumptions, this yields a zero-dimensional tropical variety on the left, which is not only simpler to compute than its positive-dimensional counterparts, but also implies that the tropical prevariety computation on the right can be aborted if a positive-dimensional polyhedron is found. Therefore, our algorithm operates within the realm where \ref{P1} and \ref{P2} are infeasible, but the following key computational ingredients are not:
\begin{itemize}[leftmargin=12mm]
\item[\customlabel{K1}{(K1)}] computation of zero-dimensional tropical varieties in \textsc{Singular} \cite{singular,HofmannRen16},
\item[\customlabel{K2}{(K2)}] computation of zero-dimensional tropical prevarieties in \textsc{DynamicPrevariety} \cite{JSV17}.
\end{itemize}

To a degree, our approach for finding tropical defects is related to the approach for studying tropical bases in \cite{HT09,HT12}. In \cite{HT09,HT12}, the authors consider preimages of projections to $\RR^{d+1}$, where $d:=\dim \Trop(I)$. Our hyperplanes are generally given as preimages of points under a projection to $\RR^d$, but can also be regarded as preimages of lines under a projection to $\RR^{d+1}$. Hence our approach can be seen as a relaxation where instead of considering the preimage of the entire projection to $\RR^{d+1}$ we only consider the parts of the projection which meet a fixed line.

In Sections~\ref{sec:DelPezzo} and~\ref{sec:Gaussoids}, we present two tropical defects found using out algorithm, disproving Conjecture 5.3 in \cite{RSS16} and Conjecture~8.4 in \cite{BDKS17}. Note that the tropical defects were postprocessed for the ease of reproduction, see Remark~\ref{rem:singletonDefects}.
Code and auxiliary materials for this article are available at \href{https://www.software.mis.mpg.de}{software.mis.mpg.de}. More information on gaussoids can be found at \href{https://www.gaussoids.de}{gaussoids.de}.

\subsection*{Acknowledgements}
The authors would like to thank Bernd Sturmfels for his helpful comments and suggestions. All authors were partially supported by the Institut Mittag-Leffler during the research program ``Tropical Geometry, Amoebas and Polytopes''. The authors would like to thank the institute for its hospitality.

\section{Tropical defects}
In this section, we introduce the notion of tropical defects for generating sets of polynomial ideals, and two algorithms to find them around generic affine spaces $L=\Trop(H)$ of complementary dimension. To be precise, Algorithm~\ref{alg:strongGenericity} requires a generic tropicalization $L$, whereas Algorithm~\ref{alg:weakGenericity} merely requires a generic realization $H$.

We begin by briefly recalling some basic notions of tropical geometry that are of immediate relevance to us. Our notation coincides with that of \cite{MS15}, to which we refer for a more in-depth introduction of the subject. %

\begin{convention}
  For the remainder of this article, fix an algebraically closed field~$K$ with valuation $\nu\colon K^\ast \rightarrow \RR$ and residue field $\mathfrak K$ with trivial valuation. Since $K$ is algebraically closed, there is a group homomorphism $\mu:\nu(K^\ast)\rightarrow K^\ast$ such that $\nu\circ\mu=\id_{\nu(K^\ast)}$, and we abbreviate $t^{\lambda}:=\mu(\lambda)$ for $\lambda\in\nu(K^\ast)$. Moreover, we fix a multivariate (Laurent) polynomial ring $K[x^{\pm 1}]:=K[x_1^{\pm 1},\ldots,x_n^{\pm 1}]$.
\end{convention}

\begin{definition}[Initial forms, initial ideals]
  Given a polynomial $f\in K[x^{\pm 1}]$, say $f=\sum_{\alpha\in\ZZ^n} c_{\alpha}\cdot x^{\alpha}$, its \emph{initial form} with respect to a weight vector $w\in\RR^n$ is
  \begin{align*}
    \initial_w(f) {}:={}& \textstyle\sum_{w\cdot \alpha + \nu(c_{\alpha}) \text{ min.}} \overline{t^{-\nu(c_{\alpha})}c_{\alpha}}\cdot x^{\alpha} &&\in\mathfrak K[x^{\pm 1}].\\
    \intertext{For a finite set $F \subseteq K[x^{\pm 1}]$ and an ideal $I\unlhd K[x^{\pm 1}]$, we denote}
    \initial_w(F) \!:={}& \{\initial_w(g) \mid g \in F\} &&\subseteq \mathfrak K[x^{\pm 1}],\\
    \initial_w(I) {}:={}& \langle \initial_w(g)\mid g\in I\rangle&&\unlhd\mathfrak K[x^{\pm 1}].\\
    \intertext{Moreover, the \emph{Gr\"obner polyhedron} of $f$, of $I$ or of a finite set $F\subseteq K[x^{\pm 1}]$ around $w$ is defined as}
    C_w(f) :=& \overline{\{v\in\RR^n\mid \initial_w(f)=\initial_v(f) \}}&&\subseteq \RR^n,\\
    C_w(I) :=& \overline{\{v\in\RR^n\mid \initial_w(f)=\initial_v(f) \text{ for all } f\in I\}}&&\subseteq \RR^n,\\
    C_w(F) :=& \overline{\{v\in\RR^n\mid \initial_w(f)=\initial_v(f) \text{ for all } f\in F\}}\hspace{-1.75cm}&&\subseteq \RR^n.
  \end{align*}
  Note that both $C_w(f)$ and $C_w(F)$ are in fact convex polyhedra, while $C_w(I)$ is only guaranteed to be a convex polyhedron if $I$ is homogeneous.
\end{definition}

\begin{definition}[Tropical variety, tropical prevariety]
  Given a polynomial $f\in K[x^{\pm 1}]$, an ideal $I\unlhd K[x^{\pm 1}]$ and a finite set $F\subseteq K[x^{\pm 1}]$, the \emph{tropical varieties} of $f$ and $I$ and the \emph{tropical prevariety} of $F$ are defined to be
  \begin{align*}
    \Trop(f) &:= \{w \in \RR^n \mid \initial_w(f) \text{ is not a monomial}\},\\
    \Trop(I) &:= \{w \in \RR^n \mid \initial_w(f) \text{ is not a monomial for all } f\in I\},\\
  \Trop(F) &:= \{w \in \RR^n \mid \initial_w(f) \text{ is not a monomial for all } f\in F\}.
  \end{align*}
  We call a finite generating set $F\subseteq I$ a \emph{tropical basis} if
  \[ \Trop(F) = \Trop(I). \]
  Note that $\Trop(f)$, $\Trop(I)$ and $\Trop(F)$ are supports of polyhedral complexes. For both $\Trop(f)$ and $\Trop(F)$ these polyhedral complexes can be chosen to be a collection of Gr\"obner polyhedra, and, if $I$ is homogeneous, so can $\Trop(I)$.

  Let $T\subseteq \RR^n$ be the support of a polyhedral complex $\Sigma$. Recall that the \emph{star} of $T$ around a point $w\in\RR^{n}$ is given by
  \[ \Star_w T := \{v\in\RR^n\mid w+\varepsilon\cdot v\in T \text{ for }\varepsilon>0 \text{ sufficiently small}\} \]
  and that the \emph{stable intersection} of $T$ with respect to an affine subspace $H\subseteq \RR^n$ is defined to be
  \[ T\cap_{\stable} H := \bigcup_{\substack{\sigma\in\Sigma\\ \dim(\sigma + H)=n}} \sigma \cap H. \]
\end{definition}

\begin{example}\label{ex:tropicalBases}\
  Let $K=\CC\{\!\{t\}\!\}$ be the field of complex Puiseux series and consider the ideal $I\unlhd K[x^{\pm 1},y^{\pm 1}]$ which can be generated by either one of the following two generating sets:
  \[ I:=\langle \underbrace{\textcolor{blue}{x+y+1},\textcolor{orange}{x+t^{-1}y+2}}_{=:F_1}\rangle = \langle \underbrace{\textcolor{blue}{x+y+1},\textcolor{orange}{(t^{-1}-1)y+1}}_{=:F_2}\rangle \]
  Figure~\ref{fig:tropicalBases} compares the tropical prevarieties of both $F_1$ and $F_2$ with the tropical variety of $I$, showing that $F_2$ is a tropical basis while $F_1$ is not.

  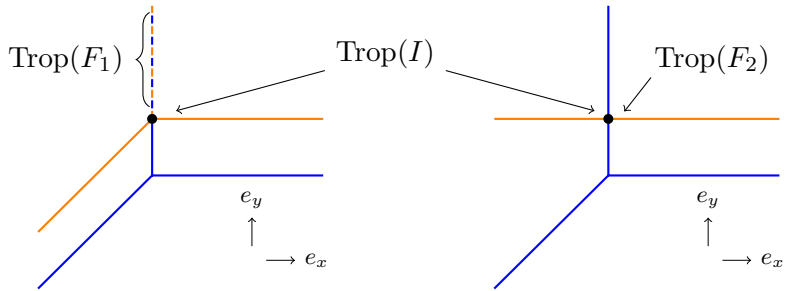
\begin{figure}[h]
    \centering
    \begin{tikzpicture}[every node/.style={font=\small}]
      \node[anchor=north] at (-3,0)
      {
        \begin{tikzpicture}[scale=1.5]
        \draw[->,very thin] (0.75,-0.875)++(0.25,0.125) -- ++(0.25,0)
        node[anchor=west,font=\scriptsize] {$e_x$};
        \draw[->,very thin] (0.75,-0.875)++(0.125,0.25) -- ++(0,0.25)
        node[anchor=south,font=\scriptsize] {$e_y$};

          \draw[thick,blue]
          (0,0) -- ++(1.5,0)
          (0,0) -- ++(0,0.5)
          (0,0) -- ++(-1,-1);
          \draw[thick,orange]
          (0,0.5) -- ++(1.5,0)
          (0,0.5) -- ++(-1,-1);
          \draw[thick,blue,dash pattern= on 3pt off 5pt,dash phase=4pt]
          (0,0.5) -- ++(0,1);
          \draw[thick,orange,dash pattern= on 3pt off 5pt]
          (0,0.5) -- ++(0,1);
          \fill
          (0,0.5) circle (1.25pt);
        \end{tikzpicture}
      };

      \draw[decorate,decoration={brace,amplitude=5pt,mirror},xshift=0pt,yshift=0pt]
      (-3.5,-0.25) -- (-3.5,-1.5) node[midway,anchor=east,xshift=-2mm] (TropF1) {$\Trop(F_1)$};
      \node[anchor=north] at (3,0)
      {
        \begin{tikzpicture}[scale=1.5]
          \draw[->,very thin] (0.75,-0.875)++(0.25,0.125) -- ++(0.25,0)
          node[anchor=west,font=\scriptsize] {$e_x$};
          \draw[->,very thin] (0.75,-0.875)++(0.125,0.25) -- ++(0,0.25)
          node[anchor=south,font=\scriptsize] {$e_y$};

          \draw[thick,blue]
          (0,0) -- ++(1.5,0)
          (0,0) -- ++(0,1.5)
          (0,0) -- ++(-1,-1);
          \draw[thick,orange]
          (-1,0.5) -- (1.5,0.5);
          \fill
          (0,0.5) circle (1.25pt);
        \end{tikzpicture}
      };

      \node[anchor=base,yshift=-9mm,xshift=1mm] (Trop) at (-0.5,0) {$\Trop(I)$};
      \draw[thin,->] (Trop) -- ++(-2.75,-0.75);
      \draw[thin,->] (Trop) -- ++(2.75,-0.75);
      \node[xshift=85mm] (TropF2) at (TropF1) {$\Trop(F_2)$};
      \draw[thin,->] (TropF2.south west)++(0.1,0.1) -- ++(-0.35,-0.45);
    \end{tikzpicture}\vspace{-6mm}
    \caption{A tropical non-basis and a tropical basis.}
    \label{fig:tropicalBases}
  \end{figure}
\end{example}

\vspace{-3mm}

For the following result, we refer to \cite{MS15}, where it is only shown for polynomial rings. However, the result extends directly to Laurent polynomial rings, since ${\initial_w(I \cap K[x]) \cdot K[x^{\pm 1}] = \initial_w(I)}$ for all $I \unlhd K[x^{\pm 1}]$.

\begin{lemma}[{\cite[Lemma~2.4.6 and Corollary~2.4.10]{MS15}}]\label{lem:nestedWeights}
  Given an element $f\in K[x^{\pm 1}]$ and a homogeneous ideal $I\unlhd K[x^{\pm 1}]$, we have for any weight vectors $w,v\in\RR^n$ and $\varepsilon>0$ sufficiently small:
  \[ \initial_v \initial_w(f) = \initial_{w+\varepsilon\cdot v}(f) \text{ and } \initial_v \initial_w(I) = \initial_{w+\varepsilon\cdot v}(I). \]
  In particular, for a finite set $F\subseteq K[x^{\pm 1}]$ or an ideal $I\unlhd K[x^{\pm 1}]$ this implies
  \[ \Trop(\initial_w F) = \Star_w \Trop(F) \text{ and } \Trop(\initial_w I) = \Star_w \Trop(I). \]
\end{lemma}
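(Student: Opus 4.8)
The plan is to reduce all four assertions to the polynomial-ring case, which is \cite[Lemma~2.4.6 and Corollary~2.4.10]{MS15}, by way of the identity $\initial_w(I\cap K[x])\cdot K[x^{\pm 1}] = \initial_w(I)$ noted just before the statement. There are three ingredients: the equality $\initial_v\initial_w(f) = \initial_{w+\varepsilon v}(f)$ for a single Laurent polynomial $f$, the analogous equality for a homogeneous ideal $I$, and the two consequences for $\Trop$. First I would treat the polynomial case directly, as it is purely combinatorial and needs no homogeneity: writing $f = \sum_\alpha c_\alpha x^\alpha$ and $W := \{\alpha \in \Supp(f) : w\cdot\alpha + \nu(c_\alpha) \text{ is minimal}\}$, and using that $\mathfrak K$ carries the trivial valuation, the form $\initial_v\bigl(\initial_w(f)\bigr)$ is obtained from $\initial_w(f) = \sum_{\alpha\in W}\overline{t^{-\nu(c_\alpha)}c_\alpha}\,x^\alpha$ by keeping the terms with $v\cdot\alpha$ minimal over $W$, without altering coefficients. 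Since $(w+\varepsilon v)\cdot\alpha + \nu(c_\alpha) = \bigl(w\cdot\alpha + \nu(c_\alpha)\bigr) + \varepsilon\,(v\cdot\alpha)$ and $\Supp(f)$ is finite, for all sufficiently small $\varepsilon>0$ the support of $\initial_{w+\varepsilon v}(f)$ equals $\{\alpha\in W : v\cdot\alpha \text{ minimal}\}$, with the same coefficients; hence $\initial_{w+\varepsilon v}(f) = \initial_v\initial_w(f)$.

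For the ideal statement I would set $J := I\cap K[x]$. As $I$ is homogeneous and stable under multiplication by the monomial units $x^\beta$, every $g\in I$ satisfies $x^{(N,\dots,N)}g \in J$ for $N \gg 0$, so $I = J\cdot K[x^{\pm 1}]$; and the graded components of an element of $J$ again lie in $J$, so $J\unlhd K[x]$ is homogeneous. Since multiplication by a Laurent monomial commutes with $\initial_u(\cdot)$, this gives $\initial_u(I) = \initial_u(J)\cdot\mathfrak K[x^{\pm 1}]$ for every weight $u$ --- precisely the quoted identity. Now \cite[Lemma~2.4.6]{MS15} applied to the homogeneous ideal $J\unlhd K[x]$ gives $\initial_v\initial_w(J) = \initial_{w+\varepsilon v}(J)$ in $\mathfrak K[x]$ for all small $\varepsilon>0$, and extending scalars to $\mathfrak K[x^{\pm 1}]$ --- using that $\initial_w(\cdot)$, $\initial_v(\cdot)$, and extension of scalars all commute with clearing monomial denominators --- yields $\initial_v\initial_w(I) = \initial_{w+\varepsilon v}(I)$.

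For the two consequences: the statement about $F$ follows by intersecting, over the finitely many $f\in F$ (so that a common threshold for $\varepsilon$ exists), the polynomial case, since $v\in\Trop(\initial_w F)$ iff $\initial_v\initial_w(f) = \initial_{w+\varepsilon v}(f)$ is a non-monomial for every $f\in F$, i.e. iff $w+\varepsilon v\in\Trop(F)$ for small $\varepsilon$, i.e. iff $v\in\Star_w\Trop(F)$. For the statement about $I$, $\Trop$ is itself unaffected by clearing monomial denominators --- $\Trop(I) = \Trop(J)$ and $\Trop(\initial_w I) = \Trop(\initial_w J)$, because $g\mapsto x^{(N,\dots,N)}g$ multiplies initial forms by a monomial and so preserves non-monomiality --- whence $\Trop(\initial_w I) = \Trop(\initial_w J) = \Star_w\Trop(J) = \Star_w\Trop(I)$, the middle equality being \cite[Corollary~2.4.10]{MS15} over $K[x]$.

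There is no genuinely hard step here; the points requiring care are the uniform choice of $\varepsilon$ (finitely many terms, finitely many $f\in F$, and the finitely-many-Gr\"obner-cones fact underlying \cite{MS15}) and the routine bookkeeping that passing between $K[x]$ and $K[x^{\pm 1}]$ --- equivalently, multiplying by monomial units --- commutes with forming initial forms, initial ideals, and tropicalizations, which is exactly what makes the reduction to \cite{MS15} legitimate.
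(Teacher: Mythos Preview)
Your proposal is correct and follows precisely the approach the paper indicates: the paper does not give a proof beyond the sentence preceding the lemma, which says the result is in \cite{MS15} for polynomial rings and extends to Laurent polynomial rings via the identity $\initial_w(I\cap K[x])\cdot K[x^{\pm 1}] = \initial_w(I)$. You have simply fleshed out that reduction --- verifying the single-polynomial case directly, checking that $J=I\cap K[x]$ is homogeneous, that extension of scalars commutes with taking initial ideals, and that $\Trop$ is unchanged under clearing monomial denominators --- exactly as intended.
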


We will now introduce the notion of a tropical defect and two algorithms for finding them around affine spaces of complementary dimension. For the sake of simplicity, we restrict ourselves to affine spaces in direction of the last few coordinates, see Example~\ref{ex:strongGenericity} for general affine spaces.

\begin{definition}[Tropical defects]
  Let $I\unlhd K[x^{\pm 1}]$ be a polynomial ideal with finite generating set $F\subseteq I$. We call a finite tuple $\mathbf{w}:=(w_0,\dots,w_k) \in (\RR^n)^{k+1}$ a \emph{tropical defect} if for all $\varepsilon > 0$ sufficiently small we have
  \[ w_0 + \varepsilon w_1 + \dots + \varepsilon^{k} w_k \in \Trop(F)\setminus\Trop(I). \]
\end{definition}

\begin{example}\label{ex:tropicalDefects}
  For $I=\langle F_1\rangle$ from Example~\ref{ex:tropicalBases}, the tuple $(w,v)$ with $w:=(0,1)$ and $v:=(0,1)$ is a tropical defect, while the singleton $(w)$ is not. On the other hand, the singleton $(u)$ with $u:=(0,2)$ is a tropical defect, see Figure~\ref{fig:tropicalDefects}. \vspace{-7mm}
\end{example}

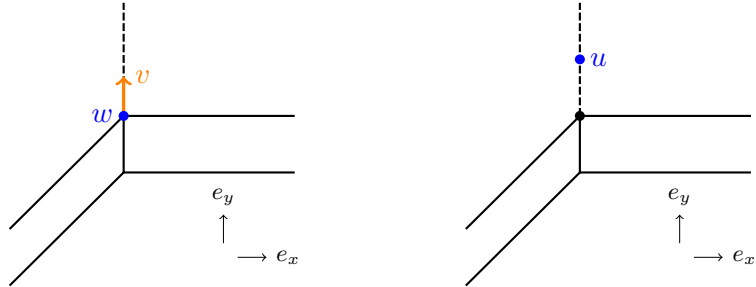
\begin{figure}[h]
  \centering
  \begin{tikzpicture}[every node/.style={font=\small}]
    \node[anchor=north] at (-3,0)
    {
      \begin{tikzpicture}[scale=1.5]
      \draw[->,very thin] (0.75,-0.875)++(0.25,0.125) -- ++(0.25,0)
      node[anchor=west,font=\scriptsize] {$e_x$};
      \draw[->,very thin] (0.75,-0.875)++(0.125,0.25) -- ++(0,0.25)
      node[anchor=south,font=\scriptsize] {$e_y$};

        \draw[thick]
        (0,0) -- ++(1.5,0)
        (0,0) -- ++(0,0.5)
        (0,0) -- ++(-1,-1);
        \draw[thick]
        (0,0.5) -- ++(1.5,0)
        (0,0.5) -- ++(-1,-1);
        \draw[thick,dash pattern= on 3pt off 5pt,dash phase=4pt]
        (0,0.5) -- ++(0,1);
        \draw[thick,dash pattern= on 3pt off 5pt]
        (0,0.5) -- ++(0,1);
        \draw[very thick,orange,->]
        (0,0.5) -- ++(0,0.35) node[anchor=west] {$v$};
        \fill[blue]
        (0,0.5) circle (1.25pt);
        \node[anchor=east,blue] at (0,0.5) {$w$};
      \end{tikzpicture}
    };

    \node[anchor=north] at (3,0)
    {
      \begin{tikzpicture}[scale=1.5]
      \draw[->,very thin] (0.75,-0.875)++(0.25,0.125) -- ++(0.25,0)
      node[anchor=west,font=\scriptsize] {$e_x$};
      \draw[->,very thin] (0.75,-0.875)++(0.125,0.25) -- ++(0,0.25)
      node[anchor=south,font=\scriptsize] {$e_y$};

        \draw[thick]
        (0,0) -- ++(1.5,0)
        (0,0) -- ++(0,0.5)
        (0,0) -- ++(-1,-1);
        \draw[thick]
        (0,0.5) -- ++(1.5,0)
        (0,0.5) -- ++(-1,-1);
        \draw[thick,dash pattern= on 3pt off 5pt,dash phase=4pt]
        (0,0.5) -- ++(0,1);
        \draw[thick,dash pattern= on 3pt off 5pt]
        (0,0.5) -- ++(0,1);
        \fill
        (0,0.5) circle (1.25pt);
        \fill[blue]
        (0,1) circle (1.25pt);
        \node[anchor=west,blue] at (0,1) {$u$};
      \end{tikzpicture}
    };
  \end{tikzpicture}\vspace{-6mm}
  \caption{Two tropical defects.}
  \label{fig:tropicalDefects}
\end{figure}
\vspace{-7mm}

\begin{remark}[Singleton tropical defects]\label{rem:singletonDefects}
  Note that any tropical defect $(w_0,\ldots,w_k)$ of a homogeneous ideal can be transformed into a singleton tropical defect $u$ through a single (tropical) Gr\"obner basis \cite{ChanMaclagan18} or standard basis computation \cite{MarkwigRenTropicalVarieties}:

  One can simulate the weight vector $w_\varepsilon:=w_0 + \varepsilon w_1 + \dots + \varepsilon^{k} w_k$ for $\varepsilon>0$ sufficiently small through a sequence of weights as in Lemma~\ref{lem:nestedWeights}. In particular, we can compute a Gr\"obner basis with respect to the sequence of weights, which gives us the inequalities and equations of the Gr\"obner cone $C_{w_\varepsilon}(I)$ by \cite[proof of Prop. 2.5.2]{MS15}. Any $u\in \Relint C_{w_\varepsilon}(I)$ is a singleton tropical defect.

  For the ease of verification, the tropical defects in Sections~\ref{sec:DelPezzo} and~\ref{sec:Gaussoids} have been transformed into singletons.
\end{remark}

Algorithm~\ref{alg:strongGenericity} checks for tropical defects around affine subspaces which satisfy a strong genericity assumption.

\begin{algorithm}[Testing for defects, strong genericity] \label{alg:strongGenericity}\
  \begin{algorithmic}[1]
    \REQUIRE{$(F,v)$, where
      \begin{enumerate}[leftmargin=3mm]
      \item $F\subseteq K[x^{\pm 1}]$, a finite generating set of a $d$-dimensional prime ideal $I\subseteq K[x^{\pm 1}]$, and assume w.l.o.g. that
        \begin{equation}
          \label{eq:projectionStrong}\tag{$\ast$}
          \pi(\Trop(I)) = \RR^d,
        \end{equation}
        where $\pi:\RR^n\rightarrow\RR^d$ denotes the projection onto the first $d$ coordinates.
      \item $v \in \RR^d$, describing an affine subspace $H:=\pi^{-1}(v) \subseteq\RR^n$ of complementary dimension $n-d$ such that the following \textit{strong genericity} assumption holds:
      \begin{equation} \label{eq:strongGenericity} \tag{SG}
        \Trop(I)\cap H = \Trop(I)\cap_{\stable} H.
      \end{equation}
    \end{enumerate}
    }
    \ENSURE{$(b,\mathbf{w})$, such that
      \begin{enumerate}[leftmargin=*]
      \item if b=\texttt{true}, then $\mathbf{w}$ is a tropical defect,
      \item if b=\texttt{false}, then $\Trop(F)\cap H = \Trop(I)\cap H$. (In this case, $\mathbf{w} := 0$.)
      \end{enumerate}
    }
    \STATE Set $F':=F\cup\{x_i-t^{v_i}\mid i=1,\dots,d \}$ and $I':= I+\langle x_i-t^{v_i}\mid i=1,\dots,d\rangle$.
    \STATE Compute the tropical prevariety $\Trop(F')$. %
    \IF{$\exists w \in \Trop(F')$ with $\dim C_w(F') > 0$}
    \STATE Pick $0\neq u\in \Lin(C_w(F')-w)$. \AlgComment{8mm}{where $C_w(F')-w:=\{v-w\mid v\in C_w(F')\}$}
    \RETURN{(\texttt{true}, $(w,u)$).} \label{algstep:SGposDim}
    \ENDIF

    \STATE Compute the tropical variety $\Trop(I')$. %
    \IF{$\exists w\in \Trop(F') \setminus \Trop(I')$}
    \RETURN{(\texttt{true}, $w$)} \label{algstep:SGstrongDefect}
    \ELSE
    \RETURN{(\texttt{false}, $0$)} \label{algstep:SGNoDefects}
    \ENDIF
  \end{algorithmic}
\end{algorithm}

\begin{proof}[Correctness of Algorithm~\ref{alg:strongGenericity}]
  Note that \eqref{eq:strongGenericity} implies that $\Trop(I) \cap H$ is at most zero-dimensional, since $H$ is of complementary dimension to $\Trop(I)$ and by \cite[Theorem 3.6.10]{MS15}, while \eqref{eq:projectionStrong} ensures that it is not empty. By \cite[Theorem~1.1]{OP13}, we therefore have
  \begin{align*}
  \Trop(I') &= \Trop(I+\langle x_i-t^{v_i}\mid i=1,\dots,d\rangle)  \\
  &= \Trop(I)\cap\Trop(\langle x_i-t^{v_i}\mid i=1,\dots,d\rangle) = \Trop(I) \cap H.
  \end{align*}

  If the algorithm terminates at Line~\ref{algstep:SGposDim}, then $C_w(F')$ is a positive-dimensional polyhedron contained in $\Trop(F') = \Trop(F) \cap H$, whereas $\Trop(I) \cap H$ consists of finitely many points.
  In particular, we have that $w+\varepsilon u\notin\Trop(I)$ for $\varepsilon>0$ sufficiently small.

  If the algorithm terminates at Line~\ref{algstep:SGstrongDefect}, then $w$ is a tropical defect since
  \[w \in \Trop(F')\setminus\Trop(I') = (\Trop(F) \cap H) \setminus (\Trop(I) \cap H) \subseteq \Trop(F)\setminus\Trop(I).\]

  Finally, should the algorithm terminate at Line~\ref{algstep:SGNoDefects}, then
  \[\Trop(F) \cap H = \Trop(F') = \Trop(I') = \Trop(I) \cap H. \qedhere\]
\end{proof}

\begin{example}\label{ex:strongGenericity}
  Consider the generating set $F$ of the following one-dimensional ideal:
  \[ I:= \langle \underbrace{(x+1)(y+1),(x-1)(y+1)}_{=:F} \rangle \subseteq \CC[x^{\pm1},y^{\pm1}], \]
  and let $\pi\colon\RR^{\{x,y\}}\rightarrow\RR^{\{x\}}$ denote the projection onto the $x$-coordinate. Figure~\ref{fig:exampleStrongGenericity} shows the tropical variety $\Trop(I)$ and the tropical prevariety $\Trop(F)$.

  Then for any $v\in\RR$ the affine line $H_v:=\pi^{-1}(v)$ satisfies \eqref{eq:strongGenericity}. Algorithm~\ref{alg:strongGenericity} yields a tropical defect if and only if $v=0$, in which %
  case
  it terminates at Line~\ref{algstep:SGposDim}. %

  \begin{figure}[h]
    \centering
    \begin{tikzpicture}[scale=1]
      \draw[->,very thin] (1.5,-2)++(0.5,0.25) -- ++(0.5,0)
      node[anchor=west,font=\scriptsize] {$e_x$};
      \draw[->,very thin] (1.5,-2)++(0.25,0.5) -- ++(0,0.5)
      node[anchor=south,font=\scriptsize] {$e_y$};

      \draw[very thin] (0,-2) -- (0,2);
      \draw[very thin] (1.5,2) -- (-2.5,-2);
      \draw[orange,thick] (-2.75,0) -- (2.75,0);
      \draw[orange,very thick] (0,-1.75) -- (0,1.75);
      \draw[blue,very thick] (-2.25,0) -- (2.25,0);
      \fill[blue] (0,0) circle (2pt);
      \node[blue,anchor=north west,font=\scriptsize] at (0,0) {$0$};
      \node[blue,anchor=south east,font=\footnotesize] at (2.25,0)
      {$\Trop(I)$};
      \node[orange,anchor=south east,font=\footnotesize] at (0,-1.75)
      {$\Trop(F)$};
      \node[anchor=north east,font=\footnotesize] at (0,2) {$H_0$};
      \node[anchor=north west,font=\footnotesize] at (1.25,2) {$L_{-1}$};
      \fill[blue] (-0.5,0) circle (1.25pt);
      \fill[orange] (0,0.5) circle (1.25pt);
    \end{tikzpicture}%
    \begin{tikzpicture}[scale=1]
      \draw[->,very thin] (1.5,-2)++(0.5,0.25) -- ++(0.5,0)
      node[anchor=west,font=\scriptsize] {$e_a$};
      \draw[->,very thin] (1.5,-2)++(0.25,0.5) -- ++(0,0.5)
      node[anchor=south,font=\scriptsize] {$e_b$};

      \draw[very thin] (2,-2) -- (-2,2);
      \draw[very thin] (-0.5,2) -- (-0.5,-2);
      \draw[orange,thick] (-2.75,0) -- (2.75,0);
      \draw[orange,very thick] (1.75,-1.75) -- (-1.75,1.75);
      \draw[blue,very thick] (-2.25,0) -- (2.25,0);
      \fill[blue] (0,0) circle (2pt);
      \node[blue,anchor=north,font=\scriptsize] at (0,0) {$0$};
      \node[blue,anchor=south east,font=\footnotesize] at (2.25,0) {$\Trop(\psi(I))$};
      \node[orange,fill=white,inner sep=1pt,anchor=south east,font=\footnotesize] at (1.25,-1.75) {$\Trop(\psi(F))$};
      \node[anchor=north east,font=\footnotesize] at (-1.8,2) {$(\psi^{\flat})^{-1}H_0$};
      \node[anchor=north west,font=\footnotesize] at (-0.5,2) {$(\psi^{\flat})^{-1}L_{-1}$};
      \fill[blue] (-0.5,0) circle (1.25pt);
      \fill[orange] (-0.5,0.5) circle (1.25pt);
    \end{tikzpicture}
    \vspace{-2mm}
    \caption{$\Trop(I)\subseteq\Trop(F)$ in Example~\ref{ex:strongGenericity}.}
    \label{fig:exampleStrongGenericity}
  \end{figure}
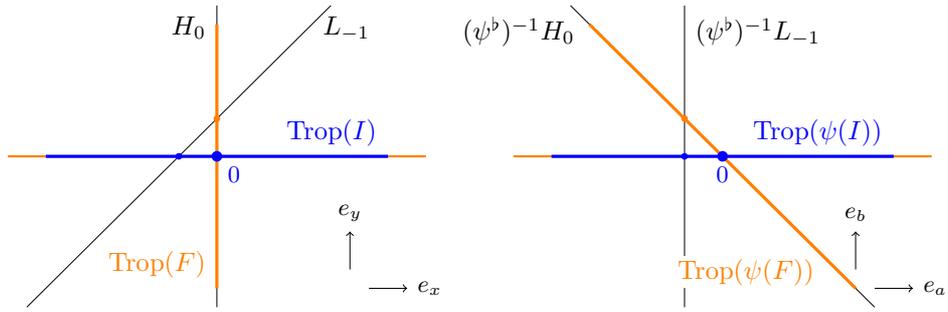

  We can also use arbitrary rational affine subspaces like $L_v := v\cdot e_x + \Lin(e_x+e_y)$ by applying a unimodular transformation $\psi$ on the ring of Laurent polynomials whose induced map $\psi^\flat$ on the weight space aligns $L_v$ with the coordinate axes:
  \begin{align*}
    &\psi\phantom{{}^\flat}\colon&K[x^{\pm 1},y^{\pm 1}] &\stackrel{\sim}{\longrightarrow} K[a^{\pm 1},b^{\pm 1}],& x&\mapsto ab,& \ y &\mapsto b,\\
    &\psi^\flat\colon &\RR^{\{x,y\}} &\stackrel{\sim}{\longleftarrow} \RR^{\{a,b\}},& e_x&\mapsfrom e_a,& e_x+e_y&\mapsfrom e_b.
  \end{align*}
  This transformation yields
  \begin{align*}
    \psi(F) &= \{(ab+1)(b+1),(ab-1)(b+1)\} \text{ and } \\
    (\psi^\flat)^{-1}(L_v) &= v\cdot e_a + \Lin(e_b) \subseteq \RR^{\{a,b\}},
  \end{align*}
  which always satisfies \eqref{eq:strongGenericity} and for which Algorithm~\ref{alg:strongGenericity} terminates at Line~\ref{algstep:SGstrongDefect} if and only if $v\neq 0$, as $\Trop(\psi(F))\cap (\psi^\flat)^{-1}(L_v)$ consists of two points of which only one belongs to the tropical variety $\Trop(\psi(I))$, see Figure~\ref{fig:exampleStrongGenericity}.
\end{example}

\begin{example}\label{ex:constructLinSp}
  Consider the generating set $F$ of the following one-dimensional ideal:
  \[ I:=\langle \underbrace{x+z+2, y+z+1}_{=:F} \rangle\unlhd \CC[x^{\pm 1},y^{\pm 1},z^{\pm 1}], \]
  and let $\pi\colon\RR^{\{x,y,z\}}\rightarrow\RR^{\{x\}}$ denote the projection onto the $x$-coordinate.
  Figure~\ref{fig:constructLinSp} shows $\Trop(I)$ as well as $\Trop(F)$. Consider the plane $H_v:=\pi^{-1}(v)$ for some $v\in\RR$.
  Note that while any $H_v$ with $v\neq 0$ satisfies \eqref{eq:strongGenericity}, only $H_v$ with $v>0$ yields a tropical defect in Algorithm~\ref{alg:strongGenericity}, Line~\ref{algstep:SGposDim}.
\end{example}
\begin{figure}[h]
  \centering
  \begin{tikzpicture}[x={(0.87cm,0.5cm)},y={(0cm,1cm)},z={(1cm,0cm)}]
    \fill[orange!50] (0,0,0) -- (2.25,0,0) -- (0,2.25,0);
    \node[orange!50!black,anchor=south west] at (1.05,1.05,0) {$\subseteq \Trop(F)$};
    \draw[very thick,->] (0,0,0) -- (2.5,0,0) node[anchor=west,font=\footnotesize] {$e_x$};
    \draw[very thick,->] (0,0,0) -- (0,2.5,0) node[anchor=west,font=\footnotesize] {$e_y$};
    \draw[very thick,->] (0,0,0) -- node[below,xshift=3mm] {$\Trop(I)$} (0,0,2.5) node[anchor=south,font=\footnotesize] {$e_z$};
    \draw[very thick,->] (0,0,0) -- (-1,-1,-1) node[anchor=north,font=\footnotesize] {$-e_x-e_y-e_z$};
    \fill (0,0,0) circle (2pt);

  \end{tikzpicture}\vspace{-3mm}
  \caption{$\Trop(I)\subseteq \Trop(F)$ from Example~\ref{ex:constructLinSp}}\label{fig:constructLinSp}.
\end{figure}
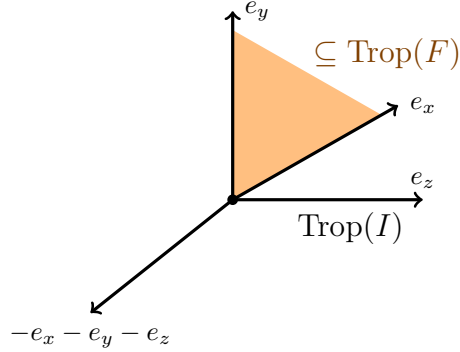

\begin{remark}[Strong genericity]
  In Algorithm~\ref{alg:strongGenericity}, the strong genericity assumption \eqref{eq:strongGenericity} is only required for the correctness of the output at Line~\ref{algstep:SGposDim}. If the algorithm does not terminate at Line~\ref{algstep:SGposDim}, then \eqref{eq:strongGenericity} must hold because $\Trop(F) \cap H = \Trop(F')$ is zero-dimensional, and hence so is $\Trop(I) \cap H \subseteq\Trop(F)\cap H$. This implies that for $\lambda_i \in K$ generic with $\nu(\lambda_i) = v_i$, we have
  \[\Trop(I) \cap H = \Trop(I+\langle x_i - \lambda_i \rangle) = \Trop(I) \cap_{\stable} H,\]
  where the first equality holds by \cite[Theorem~1.1]{OP13}, and the second equality holds by \cite[Theorem~3.6.1]{MS15}.

  One possibility to ascertain whether \eqref{eq:strongGenericity} holds upon termination at Line~\ref{algstep:SGposDim} is to compute the Gr\"obner polyhedron $C_w(I)$, if $I$ is homogeneous. However, that requires a tropical Gr\"obner basis or standard basis, and hence might not be viable for large examples.
\end{remark}

In practice, affine subspaces satisfying the strong genericity assumption induce several problems, see Remark~\ref{rem:strongVsWeak}. This is why we introduce Algorithm~\ref{alg:weakGenericity}, which relies on a weakened genericity assumption. Note that, compared to Algorithm~\ref{alg:strongGenericity}, Algorithm~\ref{alg:weakGenericity} requires the computation of $\Trop(\initial_w(F))$ for some $w\in\Trop(F)\cap H$ at Line~\ref{algstep:in_wF}. This is unproblematic however, since $\initial_w(f)$ has fewer terms than $f$ for all $f\in F$, so that $\Trop(\initial_w(f))$ will be simpler than $\Trop(f)$. In fact, generically $\initial_w(f)$ will be a binomial and $\Trop(\initial_w(f))$ a linear space.

\begin{algorithm}[Testing for defects, weak genericity] \label{alg:weakGenericity} \
  \begin{algorithmic}[1]
    \REQUIRE{$(F,\lambda)$, where
      \begin{enumerate}[leftmargin=3mm]
      \item $F\subseteq K[x^{\pm 1}]$, a finite generating set of a $d$-dimensional prime ideal $I\subseteq K[x^{\pm 1}]$, and assume w.l.o.g. that
        \begin{equation}
          \label{eq:projectionWeak}\tag{$\ast$}
          \pi(\Trop(I)) = \RR^d,
        \end{equation}
        where $\pi:\RR^n\rightarrow\RR^d$ denotes the projection onto the first $d$ coordinates.
      \item $\lambda\in (K^\ast)^d$, describing an affine subspace $H:=\Trop(\{x_i-\lambda_i\mid i=1,\dots,d\})\subseteq\RR^n$ of complementary dimension $n-d$ such that the following \textit{weak genericity} assumption holds:
        \begin{equation}
          \tag{WG}\label{eq:weakGenericity}
          \Trop(I+\langle x_i-\lambda_i \mid i=1,\ldots,d\rangle) = \Trop(I)\cap_{\stable} H.
        \end{equation}
    \end{enumerate}
    }
    \ENSURE{$(b,\mathbf{w})$, such that
      \begin{enumerate}[leftmargin=*]
      \item if b=\texttt{true}, then $\mathbf{w}$ is a tropical defect,
      \item if b=\texttt{false}, then $\Trop(F)\cap_{\stable} H = \Trop(I)\cap_{\stable} H$. (In this case, $\mathbf{w} := 0$.)
      \end{enumerate}
    }
    \STATE Set $H:=\Trop(\{x_i-\lambda_i\mid i=1,\dots,d\})$ and $F':=F\cup\{x_i-\lambda_i\mid i=1,\dots,d \}$.
    \STATE Compute the tropical prevariety $\Trop(F')$. \AlgComment{0mm}{$\Trop(F')=\Trop(F)\cap H$}
    \STATE Initialize $\Delta:=\emptyset$. \AlgComment{0mm}{$\Delta$ will consist of tuples of weight vectors}
  \item[] \AlgComment{32.5mm}{first entry: weight vector in the stable intersection $\Trop(F)\cap_{\stable} H$}
  \item[] \AlgComment{32.5mm}{further entries: bookkeeping of the original cone in $\Trop(F)$}
    \FOR{$w\in\Trop(F')$ with $\dim C_w (F') = 0$}
    \STATE Compute $\Trop(\initial_w F)$. \label{algstep:in_wF}
    \IF{$\exists u\in \Trop(\initial_w F):\ \dim C_u(\initial_w F) > d$}
    \STATE Let $v_1,\ldots,v_k$ be a basis of $\Lin(C_u(\initial_w F))$.
    \RETURN{(\texttt{true}, $(w,u,v_1,\ldots,v_k)$).} \label{algstep:WGhighDim}
    \ENDIF
    \IF{$\exists u\in\Trop(\initial_w F)$ with $\dim(C_u(\initial_w F) + H)=n$}\label{algstep:DeltaBegin}
    \STATE Let $v_1,\ldots,v_d$ be a basis of $\Lin(C_u(\initial_w F))$.
    \STATE $\Delta := \Delta \cup \{(w,u,v_1,\dots,v_d)\}$.\label{algstep:DeltaEnd}
    \ENDIF
    \ENDFOR

    \STATE Compute $\Trop(I')$, where $I':= I+\langle x_i-\lambda_i\mid i=1,\dots,d\rangle$.
    \IF{$\exists (w,u,v_1,\dots,v_d) \in \Delta$ such that $w \notin \Trop(I')$}
    \RETURN{(\texttt{true}, $(w,u,v_1,\ldots,v_d)$).}\label{algstep:WGlowDim}
    \ELSE
    \RETURN{(\texttt{false}, $0$).}\label{algstep:WGNoDefects}
    \ENDIF
  \end{algorithmic}
\end{algorithm}

\begin{proof}[Correctness of Algorithm~\ref{alg:weakGenericity}]
  Suppose the algorithm terminates at Line~\ref{algstep:WGhighDim}. By Lemma~\ref{lem:nestedWeights}, there exists $\delta>0$ such that $D:=\{w+\varepsilon u+\varepsilon^2 v_1+\dots+\varepsilon^{k+1} v_k\mid 0<\varepsilon<\delta \}\subseteq \Trop(F)$. Because any infinite subset of $D$ has affine span $w+\Lin(C_u(\initial_w F))$ of dimension $k>d=\dim \Trop(I)$, any polyhedron on $\Trop(I)$ will have a finite intersection with $D$. In particular, this implies that $w+\varepsilon u+\varepsilon^2 v_1+\dots+\varepsilon^{k+1} v_k\notin\Trop(I)$ for $\varepsilon>0$ sufficiently small.

  Suppose the algorithm terminates at Line~\ref{algstep:WGlowDim}. Again, by Lemma~\ref{lem:nestedWeights}, there exists $\delta>0$ such that $D:=\{w+\varepsilon u+\varepsilon^2 v_1+\dots+\varepsilon^{d+1} v_d\mid 0<\varepsilon<\delta \}\subseteq \Trop(F)$.
  Any infinite subset of $D$ has affine span  $w+\Lin(C_u(\initial_w F))$, which intersects $H$ stably. We have $w\notin \Trop(I')=\Trop(I)\cap_{\stable}H$ by Assumption~\eqref{eq:weakGenericity}, so any polyhedron on $\Trop(I)$ around $w$ can only have a finite intersection with $D$.
  In particular, this implies that $w+\varepsilon u+\varepsilon^2 v_1+\dots+\varepsilon^{k+1} v_k\notin\Trop(I)$ for $\varepsilon>0$ sufficiently small.

  Finally, suppose the algorithm terminates at Line~\ref{algstep:WGNoDefects}. Since $\Trop(F)\supseteq\Trop(I)$, we always have $\Trop(F)\cap_{\stable} H\supseteq \Trop(I)\cap_{\stable} H$. For the converse, assume there exists a weight $w\in\Trop(F)\cap_{\stable} H \setminus \Trop(I)\cap_{\stable} H$. Let $C_{u}(F)\subseteq\Trop(F)$ be a Gr\"obner polyhedron of the prevariety with $w\in C_{u}(F)\cap H$ and $\dim(C_{u}(F) + H)=n$, which necessarily implies $\dim C_{u}(F)\geq d$. If $\dim C_{u}(F)>d$, then $\dim C_u(\initial_w(F))>d$
  and we would have terminated at Line~\ref{algstep:WGhighDim}. If $\dim C_{u}(F)=d$, then $w$ appears as the first entry of some tuple in $\Delta$ by Lemma~\ref{lem:nestedWeights} and Lines~\ref{algstep:DeltaBegin} to~\ref{algstep:DeltaEnd}, hence we would have terminated at Line~\ref{algstep:WGlowDim}, as $\Trop(I')=\Trop(I)\cap_{\stable} H$ by Assumption~\eqref{eq:weakGenericity}.
\end{proof}

\begin{remark}[Weak genericity]
  If Algorithm~\ref{alg:weakGenericity} terminates at Line~\ref{algstep:WGhighDim}, then the output is correct even if the input did not satisfy the weak genericity assumption \eqref{eq:weakGenericity}, since a polyhedron in $\Trop(F)$ of too large dimension was found.
  On the other hand, the correctness of a tropical defect output at Step~\ref{algstep:WGlowDim} does depend on the assumption \eqref{eq:weakGenericity} on the input. In order to certify the correctness of the output regardless of the validity of \eqref{eq:weakGenericity},
  one needs to check that there is no sufficiently small $\varepsilon > 0$ such that $w+\varepsilon u + \varepsilon^2 v_1+\ldots+\varepsilon^{d+1} v_d \in \Trop I$. If $I$ is homogeneous, this can by \cref{lem:nestedWeights} be achieved by certifying that the iterated initial ideal $\initial_{v_d} \cdots \initial_{v_1} \initial_u \initial_w I$ is the entire Laurent polynomial ring $\mathfrak{K}[x^{\pm 1}]$.
\end{remark}

\begin{example}
  Consider the generating set from Example~\ref{ex:strongGenericity} (see also Figure~\ref{fig:exampleStrongGenericity}):
  \[ I:= \langle \underbrace{(x+1)(y+1),(x-1)(y+1)}_{=:F} \rangle \subseteq \CC[x^{\pm1},y^{\pm1}]. \]
  Unlike before, Algorithm~\ref{alg:weakGenericity} will be unable to find a tropical defect around $H_v$ even for $v=0$, always terminating at Line~\ref{algstep:WGNoDefects}. This is because without condition~\eqref{eq:strongGenericity} $H_0$ need not have a zero-dimensional intersection with $\Trop(I)$, so that its positive-dimensional intersection with $\Trop(F)$ need not arise from a tropical defect.

  However Algorithm~\ref{alg:weakGenericity} will still find a tropical defect for $L_v$ for $v\neq 0$, in which case it terminates at Line~\ref{algstep:WGlowDim}.
\end{example}

\begin{remark}[Strong genericity vs.\ weak genericity from a practical point of view]\label{rem:strongVsWeak}
  Theoretically, it is always possible to find tropical defects for generating sets which are not tropical bases using Algorithm~\ref{alg:strongGenericity} with the right choice of an affine subspace. In practice, however, it is much more reasonable to use Algorithm~\ref{alg:weakGenericity} instead.
  This is because generic $v\in\RR^d$ for Algorithm~\ref{alg:strongGenericity} usually entail high exponents in the polynomial computations, whereas generic $\lambda\in (K^\ast)^d$ for Algorithm~\ref{alg:weakGenericity} only entail big coefficients, and most computer-algebra software systems such as \textsc{Macaulay2} or \textsc{Singular} are better equipped to deal with the latter. For instance, our \textsc{Singular} experiments using Algorithm~\ref{alg:strongGenericity} regularly failed due to exponent overflows, since exponents in \textsc{Singular} are stored in the \Cplusplus{} type \texttt{signed short} (bounded by $2^{15}$ for most CPU architectures), while coefficients are stored with arbitrary precision.
\end{remark}

\begin{remark}[Comparison with existing techniques]
  As hinted in the introduction, tropical basis verification is a problem that has been studied by many people. However, the only software currently capable of this task is \textsc{gfan} \cite{gfan}, which for example has been used to prove that the $4\!\times\! 4$-minors of a $5\!\times\! n$ matrix form a tropical basis \cite{CJR11}. Its command \texttt{gfan\_tropicalbasis} computes a tropical basis of a tropical curve, and its command \texttt{gfan\_tropicalintersection} for computing tropical prevarieties $\Trop(F)$ has an optional argument \texttt{--tropicalbasistest} to test whether $\Trop(F)$ equals the tropical variety $\Trop(I)$. Compared to the algorithms in \textsc{gfan}, our techniques have the following disadvantages and advantages.

  Since our algorithms revolve around finding tropical defects, they are incapable to verify that a generating set is a tropical basis. As we only search around random hyperplanes of complementary dimension, we are also blind to lower-dimensional defects, i.e. if $\dim(\Trop(I)\setminus\Trop(F))<\dim(\Trop(I))=:d$ then the probability for a random affine hyperplane of codimension $d$ to intersect $\Trop(I)\setminus\Trop(F)$ is zero. One example where our algorithms failed to return a definite answer is \cite[Conjecture 4.8]{Rincon12}.

  In return, our algorithms avoid the computation of both $\Trop(F)$ and $\Trop(I)$. Instead of $\Trop(F)=\bigcap_{f\in F}\Trop(f)$, we compute $\Trop(F')=\bigcap_{f\in F} (\Trop(f)\cap H)$. This is faster, since $\Trop(f)\cap H$ is covered by fewer polyhedra compared to $\Trop(f)$. Moreover, instead of $\Trop(I)$ we compute $\Trop(I')$, where $I':= I+\langle x_i\!-\!\lambda_i\mid i=1,\dots,d\rangle$. This is easier since $I'$ is zero-dimensional whereas $I$ is not. Additionally, $\Trop(I')$ consists of up to $\deg(I)$ many points while $\Trop(I)$ is generally covered by many more polyhedra.
\end{remark}

\section{Application: Cox rings of cubic surfaces}\label{sec:DelPezzo}
Cox rings are global invariants of important classes of algebraic varieties. For example, they carry essential information about all morphisms to projective spaces and play a central role in the theory of universal torsors, see \cite{ADHL15} for further details.
In this section, we address \cite[Conjecture~5.3]{RSS16} on Cox rings of smooth cubic surfaces, disproving it with a tropical defect.

\begin{definition}
  Consider six points $p_1, \ldots, p_6 \in \P_\CC^2$ in general position in the complex projective plane. Up to change of coordinates, we may assume that
  \[ p_i=(1:d_i:d_i^3) \quad \text{for some } d_i\in\CC, \]
where $d_i$ satisfy certain genericity conditions, see \cite[§6]{RSS14}. Blowing up $\P_\CC^2$ in these points results in a smooth cubic surface $X := \Bl_{p_1, \ldots, p_6} \P_\CC^2$. The geometry of this surface is captured by its \emph{Cox ring}
  \[\Cox(X) := \bigoplus_{(a_0,\ldots,a_6) \in \ZZ^7} H^0(X, \O_X(a_0 E_0 + a_1 E_1 + \ldots + a_6 E_6)),\]
  where
  \begin{itemize}[leftmargin=*,label={\bluetriangleright}]
  \item $E_1, \ldots, E_6 \subseteq X$ are the exceptional divisors over the points $p_1, \ldots, p_6 \in \P_\CC^2$,
  \item $E_0 \subseteq X$ is the preimage of a line in $\P_\CC^2$ not containing $p_1, \ldots, p_6$, and
  \item $H^0(X, \O_X(a_0 E_0 + a_1 E_1 + \ldots + a_6 E_6)) \subseteq K(X)$ are the rational functions on $X$ which vanish along each $E_i$ with multiplicity at least $-a_i$ (vanishing with negative multiplicity meaning poles of positive order).
  \end{itemize}
\end{definition}

For a smooth cubic surface $X$, the Cox ring $\Cox(X)$ is a finitely generated integral domain with a natural set of $27$~generators which are the rational functions on $X$ establishing the linear equivalence of each of the $27$~lines on the cubic surface $X$ to a divisor of form $\sum_i a_i E_i \in \Div(X)$, see \cite[Theorem~3.2]{BP04}.

\begin{proposition}[{\cite[Proposition~2.2]{RSS16}}]\label{def:coxRing}
  Let $d_1,\ldots,d_6\in\CC$ and $X$ be the cubic surface that is the blowup of $(1:d_i:d_i^3)\in\P_\CC^2$. Then
  \[ \Cox(X)\cong \bigslant{\CC[E_1,\dots,E_6,F_{12},F_{13},\dots,F_{56},G_1,\dots,G_6]}{I_X}, \]
  where, up to saturation at the product of all variables, $I_X$ is generated by the following $10$~trinomials and their $260$~translates under the action of the Weyl group of type~$\mathbf{E}_6$:
  \begin{small}\allowdisplaybreaks
    \begin{align*}
      &(d_3{-}d_4)(d_1{+}d_3{+}d_4)E_2F_{12} - (d_2{-}d_4)(d_1{+}d_2{+}d_4)E_3F_{13} + (d_2{-}d_3)(d_1{+}d_2{+}d_3)E_4F_{14} ,\\
      &(d_3{-}d_5)(d_1{+}d_3{+}d_5)E_2F_{12} - (d_2{-}d_5)(d_1{+}d_2{+}d_5)E_3F_{13} + (d_2{-}d_3)(d_1{+}d_2{+}d_3)E_5F_{15} ,\\
      &(d_3{-}d_6)(d_1{+}d_3{+}d_6)E_2F_{12} - (d_2{-}d_6)(d_1{+}d_2{+}d_6)E_3F_{13} + (d_2{-}d_3)(d_1{+}d_2{+}d_3)E_6F_{16} , \\
      &(d_4{-}d_5)(d_1{+}d_4{+}d_5)E_2F_{12} - (d_2{-}d_5)(d_1{+}d_2{+}d_5)E_4F_{14} + (d_2{-}d_4)(d_1{+}d_2{+}d_4)E_5F_{15} , \\
      &(d_4{-}d_6)(d_1{+}d_4{+}d_6)E_2F_{12} - (d_2{-}d_6)(d_1{+}d_2{+}d_6)E_4F_{14} + (d_2{-}d_4)(d_1{+}d_2{+}d_4)E_6F_{16} , \\
      &(d_5{-}d_6)(d_1{+}d_5{+}d_6)E_2F_{12} - (d_2{-}d_6)(d_1{+}d_2{+}d_6)E_5F_{15} + (d_2{-}d_5)(d_1{+}d_2{+}d_5)E_6F_{16} , \\
      &(d_4{-}d_5)(d_1{+}d_4{+}d_5)E_3F_{13} - (d_3{-}d_5)(d_1{+}d_3{+}d_5)E_4F_{14} + (d_3{-}d_4)(d_1{+}d_3{+}d_4)E_5F_{15} , \\
    &(d_4{-}d_6)(d_1{+}d_4{+}d_6)E_3F_{13} - (d_3{-}d_6)(d_1{+}d_3{+}d_6)E_4F_{14} + (d_3{-}d_4)(d_1{+}d_3{+}d_4)E_6F_{16} ,\\
      &(d_5{-}d_6)(d_1{+}d_5{+}d_6)E_3F_{13} - (d_3{-}d_6)(d_1{+}d_3{+}d_6)E_5F_{15} + (d_3{-}d_5)(d_1{+}d_3{+}d_5)E_6F_{16} ,\\
      &(d_5{-}d_6)(d_1{+}d_5{+}d_6)E_4F_{14} - (d_4{-}d_6)(d_1{+}d_4{+}d_6)E_5F_{15} + (d_4{-}d_5)(d_1{+}d_4{+}d_5)E_6F_{16}.
    \end{align*}
  \end{small}\vspace{-1em}

  \noindent Here,
  \begin{itemize}[leftmargin=*,label={\bluetriangleright}]
  \item $E_i$ represents the exceptional divisor over the point $p_i$,
  \item $F_{ij}$ represents the strict transform of the line through $p_i$ and $p_j$,
   \item $G_i$ represents the strict transform of the conic through $\{p_1,\ldots,p_6\} \setminus \{p_i\}$.
  \end{itemize}
\end{proposition}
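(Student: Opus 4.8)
The plan is to obtain the presentation in three moves: exhibit a surjection onto $\Cox(X)$ from the polynomial ring on the $27$ lines, show that the $270$ trinomials lie in its kernel, and upgrade this to an equality up to saturation. For the first move I would invoke Batyrev--Popov \cite[Theorem~3.2]{BP04}: the Cox ring of a smooth cubic surface is generated as a $\CC$-algebra by the sections cutting out its $27$ lines. Choosing one such section for each line yields a surjective $\Pic(X)$-graded homomorphism
\[ \phi\colon \CC[E_1,\dots,E_6,F_{12},F_{13},\dots,F_{56},G_1,\dots,G_6]\longrightarrow\Cox(X), \]
placing each variable in the degree equal to the divisor class of the corresponding line. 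Since $\Cox(X)$ is an integral domain, $I_X:=\ker\phi$ is prime, and in particular equal to its own saturation at the product of all variables.

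For the second move I would locate the trinomials inside $I_X$ via the conic bundle structures of $X$. A smooth cubic surface carries exactly $27$ conic bundle classes in $\Pic(X)$ --- the nef classes $D$ with $D^2=0$ and $-K_X\cdot D=2$ --- each giving a fibration with precisely five reducible fibres, and $W(\mathbf{E}_6)$ permutes these $27$ classes transitively. The distinguished one is $D=E_0-E_1$, the pencil of lines through $p_1$, whose five reducible fibres are $F_{1i}+E_i$ for $i=2,\dots,6$; hence the five monomials $E_iF_{1i}$ all lie in the two-dimensional graded piece $\Cox(X)_{E_0-E_1}=H^0(X,\O_X(E_0-E_1))$, so any three of them are linearly dependent, giving $\binom{5}{3}=10$ trinomial relations. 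To compute their coefficients I would use the explicit points $p_i=(1:d_i:d_i^3)$: the line through $p_1$ and $p_i$ is cut out by
\[ \ell_i \;=\; d_1d_i(d_1+d_i)\,x_0 \;-\; (d_1^2+d_1d_i+d_i^2)\,x_1 \;+\; x_2, \]
and, for a uniform normalization of the canonical sections, $\phi(E_iF_{1i})$ agrees up to a nonzero scalar with the pull-back of $\ell_i$ viewed as a section of $\O_X(E_0-E_1)$. Reading the dependence among $\ell_2,\ell_3,\ell_4$ off the $2\times2$ minors of their coefficient matrix and simplifying the resulting Vandermonde-type expressions gives the coefficient ratio
\[ (d_3-d_4)(d_1+d_3+d_4)\;:\;-(d_2-d_4)(d_1+d_2+d_4)\;:\;(d_2-d_3)(d_1+d_2+d_3), \]
which is exactly the first displayed trinomial; the remaining nine come from the other triples in $\{2,\dots,6\}$. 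The $260$ further trinomials are the analogous conic bundle relations for the remaining $26$ conic bundles; they are the images of the first $10$ under the combinatorial $W(\mathbf{E}_6)$-action on the $27$ lines, together with its induced action on the parameters $d_i$, and they lie in $I_X$ for the same geometric reason. Thus the ideal generated by the $270$ trinomials is contained in $I_X$.

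For the third move I would promote this inclusion to an equality up to saturation. The essential input is the Batyrev--Popov conjecture --- proved for smooth del Pezzo surfaces of degree at least $2$ by Testa--V\'{a}rilly-Alvarado--Velasco, and for degree $3$ also obtainable from the $\mathbf{E}_6$-representation description of Serganova--Skorobogatov --- which asserts that $I_X$ is generated by the quadrics coming from the conic bundle structures. Since each of the $27$ conic bundles contributes a three-dimensional space of such quadrics spanned by its ten trinomials, the $270$ listed trinomials span the entire quadratic part of $I_X$; hence, after saturating at the product of all variables, they generate $I_X$, as claimed.

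The genuinely hard ingredient is this last step: showing that the conic bundle trinomials generate \emph{all} of $I_X$, rather than a proper subideal with the same saturation, is precisely the content of the Batyrev--Popov conjecture, a substantial theorem. By comparison the coordinate computation in the second move is elementary; its one subtlety is that all $27$ canonical sections must be normalized simultaneously in a manner compatible with the $W(\mathbf{E}_6)$-bookkeeping, and it is this compatibility that makes the uniform ``ten trinomials plus $260$ translates'' description possible.
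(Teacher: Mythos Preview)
The paper does not supply a proof of this proposition; it is quoted verbatim from \cite[Proposition~2.2]{RSS16} and serves only as background input for the tropical-defect application in \cref{thm:cox}. There is thus no argument in the present paper to compare your plan against.

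Your plan is nonetheless the standard and correct one. The three moves --- generation of $\Cox(X)$ by the sections of the $27$ lines via \cite{BP04}, the explicit conic-bundle trinomials obtained from the pencil $|E_0-E_1|$ and its $W(\mathbf{E}_6)$-translates, and quadric generation of $I_X$ via the resolved Batyrev--Popov conjecture (Testa--V\'arilly-Alvarado--Velasco, Serganova--Skorobogatov) --- are exactly how this result is established in the literature, and your computation of the line $\ell_i$ through $p_1$ and $p_i$ is the right way to extract the explicit coefficients. One minor remark: the ``up to saturation at the product of all variables'' clause is slightly weaker than what the full quadric-generation theorem delivers (which gives equality on the nose), but since the paper only uses $I_X$ for tropical computations over the torus, the saturated ideal is precisely what is needed.
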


The following theorem answers~\cite[Conjecture 5.3]{RSS16} negatively:

\begin{theorem}\label{thm:cox}
  For generic $d_1,\ldots,d_6 \in \CC$, the $270$ trinomial generators of $I_X$ described in \cref{def:coxRing} are not a tropical basis.
\end{theorem}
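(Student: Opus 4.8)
The plan is to apply Algorithm~\ref{alg:weakGenericity} (or Algorithm~\ref{alg:strongGenericity}) to the generating set $F$ consisting of the $270$ trinomials, for a specific choice of $d_1,\dots,d_6$, and to exhibit a tropical defect $\mathbf w$ that the algorithm returns. Since by \cref{def:coxRing} the ideal $I_X$ is prime (it is the defining ideal of the Cox ring, an integral domain) and homogeneous with respect to the $\Pic(X)$-grading, the hypotheses of the algorithm are satisfied; concretely one first passes to affine coordinates by setting the variables occurring in a spanning set of $\Pic(X)\cong\ZZ^7$ equal to generic scalars, i.e. one intersects with a generic affine subspace $H$ of complementary dimension $n-d = 27 - \dim\Trop(I_X)$. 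One then runs the zero-dimensional tropical prevariety computation \ref{K2} on $F' = F\cup\{x_i-\lambda_i\}$ and the zero-dimensional tropical variety computation \ref{K1} on $I' = I_X + \langle x_i-\lambda_i\rangle$, and reads off a point $w\in\Trop(F')$ (or a positive-dimensional cone $C_w(F')$) not accounted for by $\Trop(I')$.

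Concretely, I would carry out the following steps. First, fix explicit generic values of $d_1,\dots,d_6\in\QQ$ (or in a suitable number field) for which the genericity conditions of \cite[§6]{RSS14} hold, so that $\Cox(X)$ has the presentation of \cref{def:coxRing}; work over $K=\QQ\{\!\{t\}\!\}$ or over a residue field of positive characteristic to speed up the Gröbner computations. Second, compute the dimension $d=\dim\Trop(I_X)$ and choose the projection $\pi$ and the affine space $H$ as in the algorithm's input, choosing the $\lambda_i$ generically so that \eqref{eq:weakGenericity} holds. Third, compute $\Trop(F')$ using \textsc{DynamicPrevariety} and, for each $0$-dimensional cone $C_w(F')$, compute the (typically binomial) initial systems $\initial_w(F)$ and their tropical prevarieties to detect either a cone of dimension $>d$ (Line~\ref{algstep:WGhighDim}) or a candidate tuple for $\Delta$ (Lines~\ref{algstep:DeltaBegin}–\ref{algstep:DeltaEnd}). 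Fourth, compute $\Trop(I')$ with \textsc{Singular} and compare: if some $w\in\Delta$ fails to lie in $\Trop(I')$, the algorithm returns $(\texttt{true},\mathbf w)$, and by the correctness proof of Algorithm~\ref{alg:weakGenericity} this $\mathbf w$ is a genuine tropical defect, certifying $\Trop(F)\neq\Trop(I_X)$, i.e.\ $F$ is not a tropical basis. Finally, as in Remark~\ref{rem:singletonDefects}, one postprocesses $\mathbf w$ into a single weight vector $u$ via one tropical standard-basis computation in the Gröbner cone $C_{w_\varepsilon}(I_X)$, giving an independently checkable certificate.

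The main obstacle is computational rather than conceptual: the ambient ring has $27$ variables and $270$ generators, so even the zero-dimensional computations \ref{K1} and \ref{K2} are at the edge of feasibility, and one must be careful with the genericity assumption \eqref{eq:weakGenericity} — a bad choice of $H$ or of the $\lambda_i$ can produce a false positive at Line~\ref{algstep:WGlowDim}. To guard against this, once a candidate defect $\mathbf w$ is found, one verifies it independently: if $I_X$ is homogeneous (which it is, for the $\Pic(X)$-grading), by \cref{lem:nestedWeights} it suffices to check that the iterated initial ideal $\initial_{v_d}\cdots\initial_{v_1}\initial_u\initial_w I_X$ equals the whole Laurent polynomial ring $\fK[x^{\pm 1}]$, a single Gröbner basis computation; together with $w\in\Trop(F')$ this establishes $w_\varepsilon\in\Trop(F)\setminus\Trop(I_X)$ rigorously, completing the proof that the $270$ trinomials are not a tropical basis.
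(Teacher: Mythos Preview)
Your proposal correctly describes the search strategy the paper itself used to \emph{discover} the defect (cf.\ the closing remark on sampling affine subspaces), and the independent verification step you outline is sound. However, what you have written is a search procedure, not a proof: the theorem is an existence statement, and its proof consists simply of exhibiting one explicit weight vector $w$ together with a check that $w\in\Trop(F)\setminus\Trop(I_X)$. The paper does exactly this---it writes down a specific $w\in\ZZ^{27}$, observes that each of the $270$ trinomials has at least two terms of minimal $w$-weight (so $w\in\Trop(F)$), and exhibits the monomial $E_6F_{56}G_6\in\initial_w(I_X)$ via a single Gr\"obner basis computation (so $w\notin\Trop(I_X)$). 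All of your steps involving $H$, $\lambda_i$, \textsc{DynamicPrevariety}, $\Trop(I')$, and the set $\Delta$ belong to the discovery phase and can be dropped from the proof proper.

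There is also one genuine gap in your plan. You propose to fix explicit numerical values $d_1,\dots,d_6\in\QQ$ and find a defect for that specialization. This proves the claim only for those particular $d_i$; passing from ``some specific $d_i$'' to ``generic $d_i$'' requires a separate semicontinuity argument (that the monomial witnessed in $\initial_w(I_X)$ persists under generic deformation of the parameters), which you do not supply and which is not automatic---initial ideals can shrink under generization. The paper sidesteps this entirely by carrying out the Gr\"obner computation in $\CC(d_1,\dots,d_6)[S]$, i.e.\ treating the $d_i$ as transcendental; a monomial in $\initial_w(I_X)$ over the function field is then a monomial for Zariski-generic specializations by construction.
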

\begin{proof}
  Fix the following ordered set of variables:
  \begin{align*}
    S := \{&E_1,E_2,E_3,E_4,E_5,E_6,F_{12},F_{13},F_{14},F_{15},F_{16},F_{23},F_{24},F_{25},F_{26},\\
    &F_{34},F_{35},F_{36},F_{45},F_{46},F_{56},G_1,G_2,G_3,G_4,G_5,G_6\}.
  \end{align*}
  Let $I_X$ be the ideal in the polynomial ring $\CC(d_1,\dots,d_6)[S]$ generated by the $270$ trinomials described in \cref{def:coxRing}, and consider the weight vector
  \[ w:=(2,1,0,1,1,1,0,2,0,0,0,1,0,0,0,1,1,1,0,0,0,0,0,0,0,0,0)\in\RR^{S}. \]
  One can verify that $w$ is a tropical defect, i.e., $w$ lies in the tropical prevariety, since $\initial_w(f)$ is at least binomial for each trinomial generator $f$, and outside the tropical variety, since $\initial_w(I_X)$ contains the monomial $E_6F_{56}G_6$.
\end{proof}

\begin{remark}
  The statements in the proof of Theorem~\ref{thm:gaussoids} can be easily verified using a computer algebra system such as \textsc{Singular}. The following script is available on \href{https://software.mis.mpg.de}{software.mis.mpg.de}, and the following shortened transcript was produced using \textsc{Singular}'s online interface (version 4.1.1) available at \href{https://www.singular.uni-kl.de/tryonline}{singular.uni-kl.de/tryonline}:

\begin{lstlisting}[basicstyle=\scriptsize\ttfamily,
  keywordstyle=\color{red}\ttfamily,
  stringstyle=\color{blue}\ttfamily,
  morekeywords={LIB, int, intvec, ring, ideal, poly},
  escapechar=@]
> LIB "tropicalBasis.lib"; @\LstComment{0mm}{initializes necessary libraries and helper functions}@
> intvec wMin = 2,1,0,1,1,1,0,2,0,0,0,1,0,0,0,1,1,1,0,0,0,0,0,0,0,0,0;
                                 @\LstComment{0mm}{wMin is in min-convention}@
> intvec wMax = -wMin;           @\LstComment{0mm}{\textsc{Singular} uses max-convention}@
> intvec allOnes = onesVector(size(wMax));
> ring r = (0,d1,d2,d3,d4,d5,d6),(E1,E2,E3,E4,E5,E6,
.   F12,F13,F14,F15,F16,F23,F24,F25,F26,F34,F35,F36,F45,F46,F56,
.   G1,G2,G3,G4,G5,G6),(a(allOnes),a(wMax),lp);
                          @\LstComment{0mm}{prepending \texttt{allOnes} makes no difference mathematically}@
                          @\LstComment{0mm}{as the ideal is homogeneous,}@
                          @\LstComment{0mm}{but it helps computationally}@
> ideal F =               @\LstComment{0mm}{\textsc{Singular} ideals are lists of polynomials}@
.  (d3-d4)*(d1+d3+d4)*E2*F12+(d2-d4)*(d1+d2+d4)*E3*F13
.    -(d2-d3)*(d1+d2+d3)*E4*F14,
@\tiny$\hspace{1.75pt}\vdots$@   [...]
.  -(d5-d6)*(d1+d3+d4)*F24*G4+(d4-d6)*(d1+d3+d5)*F25*G5
.    -(d4-d5)*(d1+d3+d6)*F26*G6;
> ideal inF = initial(F,wMax); @\LstComment{0mm}{initial forms of the elements in F}@
                               @\LstComment{0mm}{all are at least binomial, hence $\text{wMax}\in\Trop(F)$}@
> ideal IX = groebner(F);
> ideal inIX = initial(IX,wMax);@\LstComment{0mm}{initial forms of Gr\"obner basis elements}@
                                @\LstComment{0mm}{this is a Gr\"obner basis of $\initial_{\text{wMax}}(I_X)$}@
> NF(E6*F56*G6,inIX);           @\LstComment{0mm}{normal form is $0$ hence $E_6*F_{56}*G_6\in\initial_{\text{wMax}}(I_X)$}@
0
\end{lstlisting}
\end{remark}

\section{Application: Realizability of valuated gaussoids}\label{sec:Gaussoids}
Gaussoids are combinatorial structures introduced by Ln\v eni\v cka and Mat\'u\v s \cite{LM07} that encode conditional independence relations among Gaussian random variables. Reminiscent of the study of matroids, Boege, D'Al\`i, Kahle and Sturmfels \cite{BDKS17} introduced the notions of oriented and valuated gaussoids.
In this section, we address the question whether all valuated gaussoids on four elements are realizable, disproving it with a tropical defect. This was initially conjectured in the first version of \cite{BDKS17}, as found on arXiv. The published version has since been updated with our \cref{thm:gaussoids}.

\begin{definition}[{\cite[\S1]{BDKS17}}] \label{def:gaussoids}
  Fix $n\in\NN$. Consider the Laurent polynomial ring
  \[R_n := \CC\big[p_{I}^{\pm 1}\mid I\subseteq [n]\big]\big[a_{\{i,j\}|K}^{\pm 1}\mid i,j \in [n] \text{ distinct}, K\subseteq [n]\setminus\{i,j\}\big], \]
  in which we abbreviate $a_{\{i,j\}|K}$ to $a_{ij|K}$, and the ideal $T_n$ generated by the following
  $2^{n-2}\binom{n}{2}$ square trinomials and the following $12\cdot 2^{n-3}\binom{n}{3}$ edge trinomials:
  \begin{align*}
    & a_{ij|K}^2 - p_{K \cup \{i\}}\:p_{K \cup \{j\}} + p_{K \cup \{i,j\}}\:p_K \quad \text{for } i,j \in [n] \text{ distinct, } K\subseteq [n]\setminus \{i,j\}, \\[2mm]
    &p_{L \cup \{k\}}\: a_{ij|L \setminus \{i,j\}} - p_{L}\: a_{ij|L \cup \{k\} \setminus \{i,j\}} - a_{k i |L \setminus \{i\}}\: a_{k j|L\setminus \{j\}}\\[1mm]
    & \hspace{65mm}\text{for } i,j,k \in [n] \text{ distinct, } L\subseteq [n]\setminus \{k\}.
  \end{align*}

  A \emph{valuated gaussoid} is a point in the tropical prevariety defined by the square and edge trinomials. It is called \emph{realizable} if it lies in the tropical variety $\Trop(T_n)$.
\end{definition}

\begin{remark}
  The variables of the ring $R$ correspond to the principal and
  almost-principal minors of a symmetric $n \times n$-matrix (i.e., determinants
  of square submatrices whose row- and column index sets differ by at most one
  index). The ideal $T_n$ corresponds to the polynomial relations among these minors for
  symmetric matrices with non-zero principal minors by
  \cite[Proposition~6.2]{BDKS17}.
\end{remark}

The following theorem negatively answers Conjecture~8.4 in the first arXiv-version of \cite{BDKS17}, and is now Theorem~8.4 in the final published version of \cite{BDKS17}:

\begin{theorem}\label{thm:gaussoids}
  Not all valuated gaussoids on four elements are realizable, i.e., the square and edge trinomials in Definition~\ref{def:gaussoids} are not a tropical basis of $T_4$.
\end{theorem}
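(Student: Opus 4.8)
The plan is to produce an explicit \emph{singleton tropical defect}, exactly as in the proof of \cref{thm:cox}. Concretely, I would exhibit one weight vector $w \in \RR^{N}$, where $N = 40$ is the number of variables of $R_4$ (the $16$ variables $p_I$ for $I \subseteq [4]$, together with the $24$ variables $a_{ij|K}$), and then verify two things. First, that $w$ lies in the tropical prevariety of the $24$ square trinomials and $96$ edge trinomials: this is purely combinatorial and uses no Gr\"obner basis, since for each of the $120$ trinomial generators $f$ one only checks that the minimum defining $\initial_w(f)$ is attained at least twice among the terms of $f$, so that $\initial_w(f)$ is not a monomial and hence $w \in \Trop(F)$. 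Second, that $w \notin \Trop(T_4)$: for this one exhibits a monomial lying in $\initial_w(T_4)$, by computing a Gr\"obner basis of $T_4$ with respect to a term order refining $w$ (in the sense of \cref{rem:singletonDefects} and \cite[proof of Prop.~2.5.2]{MS15}), forming a generating set of $\initial_w(T_4)$, and checking that some explicit monomial in the $p_I$ and $a_{ij|K}$ reduces to $0$ modulo it. Together these two checks show that the square and edge trinomials are not a tropical basis of $T_4$.

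To locate such a $w$ in the first place I would run \cref{alg:weakGenericity} — rather than \cref{alg:strongGenericity}, for the practical reasons in \cref{rem:strongVsWeak} — on the generating set $F$ together with a generic $\lambda$ describing an affine subspace $H$ of complementary dimension $N - \dim \Trop(T_4)$ inside $\RR^N$. The key computational ingredients \ref{K1} and \ref{K2} make this feasible: $\Trop(F \cup \{x_i - \lambda_i\})$ is a zero-dimensional tropical prevariety computable in \textsc{DynamicPrevariety}, and the zero-dimensional tropical variety of $T_4 + \langle x_i - \lambda_i\rangle$ is computable in \textsc{Singular}. Since $T_4$ is homogeneous, the defect tuple $\mathbf{w}$ returned by the algorithm can then be postprocessed into a single rational weight vector $w$ with small integer entries via \cref{rem:singletonDefects}, so that the final statement reduces to the self-contained verification above and no longer mentions the algorithm or the affine subspace $H$.

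The main obstacle is computational: $T_4$ lives in a $40$-variable Laurent polynomial ring and is generated by $120$ trinomials, so the Gr\"obner basis that na\"ive tropical-basis testing would require — together with the full tropical variety $\Trop(T_4)$ and prevariety $\Trop(F)$ — is effectively out of reach. The whole construction is designed to sidestep this: every heavy computation is carried out for the zero-dimensional restriction $T_4 + \langle x_i - \lambda_i\rangle$, whose tropicalization is a finite point set of size at most $\deg(T_4)$. The one Gr\"obner basis still needed to \emph{verify} the final defect $w$ is of $T_4$ itself, but with respect to a term order refining a weight $w$ under which the generators have already substantially degenerated; as the analogous computation for \cref{thm:cox} shows, this single Gr\"obner basis is feasible in \textsc{Singular} even though a generic one for $T_4$ would not be. A secondary but real concern (cf.\ \cref{rem:strongVsWeak}) is to keep the entries of $w$ small, so that the verification does not overflow \textsc{Singular}'s \texttt{signed short} exponents.
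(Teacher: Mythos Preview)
Your proposal is correct and follows essentially the same approach as the paper: the paper exhibits an explicit singleton weight vector $w\in\RR^{40}$, verifies that each of the $120$ square and edge trinomials has a non-monomial initial form at $w$, and then shows via a single Gr\"obner basis computation that $\initial_w(T_4)$ contains the monomial $a_{23}a_{23|1}$. One minor correction: the paper's sampling remark indicates that a graded reverse lexicographic Gr\"obner basis of $T_4$ \emph{was} in fact computable, so the feasibility of the verification step does not hinge on the weight $w$ degenerating the generators.
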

\begin{proof}
  Consider the following ordered set $S$ of the variables of $R_4$ and weight vector $w\in\RR^S$:
  \allowdisplaybreaks
  \begin{align*}
    S\!:=\!\{&p_{\emptyset},p_{1},p_{12},p_{123},p_{1234},p_{124},p_{13},p_{134},p_{14},p_{2},p_{23},p_{234},p_{24},p_{3},p_{34},p_{4},\\
             &a_{12},a_{12|3},a_{12|34},a_{12|4},a_{13},a_{13|2},a_{13|24},a_{13|4},a_{14},a_{14|2},a_{14|23},a_{14|3},\\
             &a_{23},a_{23|1},a_{23|14},a_{23|4},a_{24},a_{24|1},a_{24|13},a_{24|3},a_{34},a_{34|1},a_{34|12},a_{34|2}\}\\[1mm]
    w\!:=\!(&14,10,6,0,6,8,8,2,8,6,6,2,8,8,8,8,8,4,2,10,9,3,5,5,9,11,\\ %
             &1,5,7,5,5,5,7,7,1,5,8,6,4,4)\in\RR^S.
  \end{align*}
  One can check that $w$ is a tropical defect, i.e., $w$ lies in the tropical prevariety, since $\initial_w(f)$ is at least binomial for all square and edge trinomials, and outside the tropical variety, since $\initial_w(T_4)$ contains the monomial $a_{23}a_{23|1}$.
\end{proof}

\begin{remark}
  The statements in the proof of Theorem~\ref{thm:gaussoids} can be easily verified using a computer algebra system such as \textsc{Singular}. The following script is available on \href{https://software.mis.mpg.de}{software.mis.mpg.de}, and the following shortened transcript was produced using \textsc{Singular}'s online interface (version 4.1.1) available at \href{https://www.singular.uni-kl.de/tryonline}{singular.uni-kl.de/tryonline}:

\begin{lstlisting}[basicstyle=\scriptsize\ttfamily,
  keywordstyle=\color{red}\ttfamily,
  stringstyle=\color{blue}\ttfamily,
  morekeywords={LIB, int, intvec, ring, ideal, poly},
  escapechar=@]
> LIB "tropicalBasis.lib"; @\LstComment{0mm}{initializes necessary libraries and helper functions}@
> intvec wMin = 14,10,6,0,6,8,8,2,8,6,6,2,8,8,8,8,8,4,2,10,9,3,5,5,9,11,
.   1,5,7,5,5,5,7,7,1,5,8,6,4,4; @\LstComment{0mm}{wMin is in min-convention}@
> intvec wMax = -wMin;           @\LstComment{0mm}{\textsc{Singular} uses max-convention}@
> intvec allOnes = onesVector(size(wMax));
> ring r = 0,(p,p1,p12,p123,p1234,p124,p13,p134,p14,p2,p23,p234,p24,p3,p34,p4,
.   a12,a12_3,a12_34,a12_4,a13,a13_2,a13_24,a13_4,a14,a14_2,a14_23,a14_3,
.   a23,a23_1,a23_14,a23_4,a24,a24_1,a24_13,a24_3,a34,a34_1,a34_12,a34_2),
.   (a(allOnes),a(wMax),lp); @\LstComment{0mm}{prepending \texttt{allOnes} makes no difference mathematically}@
                             @\LstComment{0mm}{as the ideal is homogeneous,}@
                             @\LstComment{0mm}{but it helps computationally}@
> ideal F =                  @\LstComment{0mm}{\textsc{Singular} ideals are lists of polynomials}@
.   a34_12*a13_24+p124*a14_23-a14_2*p1234,
@\tiny$\hspace{1.75pt}\vdots$@   [...]
.   -p1*p2+a12^2+p*p12;
> ideal inF = initial(F,wMax); @\LstComment{0mm}{initial forms of the elements in F}@
                               @\LstComment{0mm}{all are at least binomial, hence $\text{wMax}\in\Trop(F)$}@
> ideal I = groebner(F);
> ideal inI = initial(I,wMax); @\LstComment{0mm}{initial forms of all elements in the Gr\"obner basis}@
                               @\LstComment{0mm}{this is a Gr\"obner basis of $\initial_{\text{wMax}}(I)$}@
> NF(a23*a23_1,inI);           @\LstComment{0mm}{normal form is $0$ hence $a_{23}a_{23|1}\in\initial_{\text{wMax}}(I)$}@
0
\end{lstlisting}
\end{remark}

\begin{remark}[sampling affine subspaces for tropical defects]
  The tropical defects in Theorems \ref{thm:cox} and \ref{thm:gaussoids} were found by repeatedly running Algorithm~\ref{alg:weakGenericity} on random affine subspaces $H\subseteq\RR^n$. In the sampling of the affine subspaces, a situation which we tried to avoid are two subspaces intersecting the tropical variety in exactly the same Gr\"obner polyhedra. In the following, we describe our sampling approach which we based on this thought.

  Even though we were unable to compute the tropical variety $\Trop(I)$ or the tropical prevariety $\Trop(F)$ in both problems, we were able to compute
  \begin{enumerate}[leftmargin=*]
  \item a Gr\"obner basis of $I$ with respect to a graded reverse lexicographical ordering,
  \item for selected finite fields $\FF$ and $d+1:=\dim(I)+1$ variables $x_{i_0},\dots,x_{i_d}$, the generator $\overline g\in \FF [x_{i_0},\dots,x_{i_d}]$ of the principal elimination ideal $(I\otimes_{\ZZ}\FF) \cap \FF [x_{i_0},\dots,x_{i_d}]$.
  \end{enumerate}
  In other words, (2) allowed for educated guesses for generators $g$ of principal elimination ideals $I \cap K[x_{i_0},\dots,x_{i_d}]$, while (1) allowed for tests whether the guesses were correct.
  Thus, we were able to compute tropical hypersurfaces $\Trop(g)\subseteq\RR^{d+1}$ which are the images of $\Trop(I)$ under selected orthogonal projections $\pi:\RR^n\rightarrow\RR^{d+1}$.

  For each projection, we then constructed affine lines $L_1,\ldots,L_k\subseteq\RR^{d+1}$ such that each maximal polyhedron of $\Trop(g)$ intersects at least one line. Their preimages $\pi^{-1}L_1,\ldots,\pi^{-1}L_k$ are then $d$-codimensional affine subspaces which were our samples for $H$.
\end{remark}

\renewcommand{\emph}[1]{\textit{#1}}

\end{document}